 \theoremstyle{plain}
 \newtheorem{propn}{Proposition}[section]
 \newtheorem{thm}[propn]{Theorem}
 \newtheorem{lemma}[propn]{Lemma}
 \newtheorem{cor}[propn]{Corollary}
 \newtheorem*{thm*}{Theorem}
 \theoremstyle{definition}
 \theoremstyle{remark}
 \newcommand{\q}{\mathcal{Q}}
 \newcommand{\s}{\mathcal{S}}
\newcommand{\Nat}{\mathbb{N}}
\newcommand{\Comp}{\mathbb{C}}
 \newcommand{\D}{\mathbb{D}}
 \newcommand{\ot}{\otimes}
\newcommand{\clb}{\mathcal{B}}
\newcommand{\clq}{\mathcal{Q}}
\newcommand{\cls}{\mathcal{S}}
\newcommand{\clw}{\mathcal{W}}
\begin{document}

\title{Rudin's Submodules of $H^2(\mathbb{D}^2)$}

\author[Das] {B. Krishna Das}

\address{%\vskip10pt
(B. K. Das) Indian Statistical Institute \\ Statistics and
Mathematics Unit \\ 8th Mile, Mysore Road \\ Bangalore \\ 560059 \\
India}

\email{dasb@isibang.ac.in, bata436@gmail.com}

\author[Sarkar]{Jaydeb Sarkar}

\address{%\vskip10pt
 (J. Sarkar) Indian Statistical Institute \\ Statistics and
 Mathematics Unit \\ 8th Mile, Mysore Road \\ Bangalore \\ 560059 \\
 India}

 \email{jay@isibang.ac.in, jaydeb@gmail.com}

 \subjclass[2010]{47A13, 47A15, 47B38, 46E20, 30H10} 
\keywords{Blaschke product, Inner sequence, Hardy space, Dimension of wandering subspace,
Rudin's example}

\begin{abstract}
Let $\{\alpha_n\}_{n\ge 0}$ be a sequence of scalars in the
open unit disc of $\mathbb{C}$, and let $\{l_n\}_{n\ge 0}$ be a sequence of
natural numbers satisfying $\sum_{n=0}^\infty (1 - l_n|\alpha_n|)
<\infty$. Then the joint $(M_{z_1}, M_{z_2})$ invariant subspace
\[\cls_{\Phi} = \vee_{n=0}^\infty \Big( z_1^n
\prod_{k=n}^\infty \left(\frac{-\bar{\alpha}_k}{|\alpha_k|}
\frac{z_2 - \alpha_k}{1 - \bar{\alpha}_k z_2}\right)^{l_k}
H^2(\mathbb{D}^2)\Big),\] is called a Rudin submodule. In this paper
we analyze the class of Rudin submodules and prove that \[
\text{dim} (\cls_{\Phi}\ominus (z_1 \cls_{\Phi}+ z_2\cls_{\Phi}))=
1+\#\{n\ge 0: \alpha_n=0\}<\infty.
\]In particular, this answer a question earlier raised by Douglas
and Yang (2000) ~\cite{DY}.
\end{abstract}

\maketitle

\section{Introduction}
Let $H^2(\D)$ denote the Hardy space over the unit disc $\D = \{z
\in \mathbb{C}: |z| < 1\}$. We also say that $H^2(\D)$ is the
\textit{Hardy module} over $\D$. The Hilbert space tensor product
$H^2(\D) \otimes H^2(\D)$ is called the \textit{Hardy module} over
$\mathbb{D}^2$ and is denoted by $H^2(\mathbb{D}^2)$. As is well
known, every vector in $H^2(\D^2)$ can be represented as square
summable power series over $\D^2$ and the multiplication operators
by the coordinate functions $(M_{z_1}, M_{z_2})$ are commuting and
doubly commuting isometries (see \cite{R}). We will
often identify $(M_{z_1}, M_{z_2})$ with $(M_z \otimes I_{H^2(\D)},
I_{H^2(\D)} \otimes M_z)$.

A closed subspace $\cls$ of $H^2(\D^2)$ (or $H^2(\D)$) is said to be
a \textit{submodule} if $\cls$ is invariant under $M_{z_1}$ and
$M_{z_2}$ (or $M_z$). Beurling's (cf. \cite{DP}) celebrated result
states that a closed subspace $\cls \subseteq H^2(\D)$ is a
submodule of $H^2(\D)$ if and only if $\cls = \theta H^2(\D)$ for
some bounded inner function $\theta \in H^\infty(\D)$. This is a
fundamental result which has far-reaching consequences. For
instance, it readily follows that $\cls = \theta H^2(\D)$ admits the
wandering subspace $\cls \ominus z \cls = \mathbb{C} \theta$. In
particular, $\cls \ominus z \cls$ is a generating set of $\cls$. The
same conclusion also holds when $\cls$ is one of the followings:
a submodule of the Bergman
module~\cite{ARS}, a doubly commuting submodule of $H^2(\D^n)$ ~\cite{SSW}
and a doubly commuting submodule of the Bergman space or the Dirichlet space 
over polydisc~\cite{CDSS}. This motivates us to look into the ``wandering
subspace'' $\clw_{\cls} := \cls \ominus (z_1 \cls + z_2 \cls) =
(\cls \ominus z_1 \cls) \cap (\cls \ominus z_2 \cls)$, and leads us
to ask whether $\clw_{\cls}$ is a generating set of $\cls$ or not,
where $\cls$ is a submodule of $H^2(\D^2)$. In general, however,
this question has a negative answer. Rudin \cite{R} demonstrated a
negative answer to this question by constructing a submodule $\cls$
of $H^2(\D^2)$ for which $\cls \ominus (z_1 \cls + z_2 \cls)$ is a
finite dimensional subspace but not a generating set of $\cls$.

Our motivation in this paper is the following: (1) to study a
natural class of submodules, namely ``generalized Rudin
submodules'', and (2) to compute the wandering dimensions of
generalized Rudin submodules. In particular, we are interested in
understanding the wandering subspace of Rudin submodules of
$H^2(\D^2)$. Our results, restricted to the case of Rudin's
submodule answers a question raised by Douglas and Yang (see Corollary~\ref{rudin}).
Also these
results are one important step in our program to understand the idea
of constructing new submodules and quotient modules out of old ones.

Given an inner function $\varphi \in H^\infty(\D)$, for notational
simplicity, we set
\[
\s_{\varphi}:=\varphi H^2(\D),\quad \text{and}\quad \q_{\varphi}:=
H^2(\D)\ominus \s_{\varphi}.
\]

We now turn to formulate our definition of generalized Rudin
submodule. Let $\Psi = \{\psi_n\}_{n=0}^\infty \subseteq
H^\infty(\mathbb{D})$ be a sequence of increasing inner functions
and $\Phi = \{\varphi_n\}_{n=0}^\infty \subseteq
H^\infty(\mathbb{D})$ be a sequence of decreasing inner functions.
Then the \textit{generalized Rudin submodule} corresponding to the
inner sequence $\Psi$ and $\Phi$ is denoted by $\cls_{\Psi, \Phi}$,
and defined by \[\cls_{\Psi, \Phi} = \mathop{\bigvee}_{n=0}^\infty
\big(\cls_{\psi_n} \otimes \cls_{\varphi_n}\big).\] Now let
$\{\alpha_n\}_{n\ge 0}$ be a sequence of points in $\D$ and
$\{l_i\}_{n=0}^\infty \subseteq \Nat$ such that
$\sum(1-l_i|\alpha_i|)<\infty$, and $\psi_n = z^n$, and
$\varphi_n:=\prod_{i=n}^{\infty}b_{\alpha_i}^{l_i}$, $n\ge 0$. Then
$\cls_{\Psi, \Phi}$ will be denoted by $\cls_{\Phi}$:
\[\cls_{\Phi} = \mathop{\bigvee}_{n=0}^\infty \big(\cls_{z^n} \otimes \cls_{\varphi_n}\big).\]
Here for each non-zero $\alpha \in \D$, we denote by $b_{\alpha}$ the
Blaschke factor $b_{\alpha}(z) : =
 \frac{-\bar{\alpha}}{|\alpha|}
\frac{z-\alpha}{1-\bar{\alpha}z}$ and for $\alpha=0$ we set $b_{0}(z):=z$.

\noindent The sequence of Blaschke products as defined above is
called the \textit{Rudin sequence}, and the submodule
$\cls_{{\Phi}}$ as defined above is called the \textit{Rudin
submodule} corresponding to the Rudin sequence ${\Phi}$. These
submodules are also called inner sequence based invariant subspaces
of $H^2(\D^2)$, and was studied by M. Seto and R. Yang \cite{SY},
Seto \cite{S1, S2, S3} and Izuchi et al. \cite{ III2}.

The main result of this paper states that
\[
\text{dim} (\cls_{\Phi}\ominus (z_1\cls_{\Phi}+z_2\cls_{\Phi}))=
1+\#\{n\ge 0: \alpha_n=0\}<\infty.
\]

The remainder of the paper is organized as follows. Section 2
collects necessary notations and contains preparatory materials,
which are an essential tool in what follows. After this preparatory
section, which contain also new results, the main theorems are
proved in Section 3.

\section{Preparatory results}

We begin with the following representations of $\cls_{\Psi,\Phi}$
%and $\q_{\Psi,\Phi}:=H^2(\D^2)\ominus\cls_{\Psi,\Phi}$
(cf.~\cite{III2}).

\begin{lemma}\label{representation}
Let $\cls_{\Psi,\Phi}$ be a generalized Rudin submodule and
$\varphi_{-1}: = 0$. Then
\begin{equation}\label{s}
\cls_{\Psi,\Phi}=\bigvee_{n=0}^{\infty}\cls_{\psi_n}\ot
\cls_{\varphi_n} = \bigoplus_{n=0}^\infty \s_{\psi_n}\ot
(\cls_{\varphi_{n}}\ominus\cls_{\varphi_{n-1}}).
\end{equation}
%and
%\begin{equation}\label{q}
%\q_{\Psi,\Phi}=H^2(\D^2)\ominus\cls_{\Psi,\Phi}
%=\bigoplus_{n=1}^{\infty}\q_{\psi_n}\ot
%(\cls_{\varphi_{n}}\ominus\cls_{\varphi_{n-1}}).
%\end{equation}
\end{lemma}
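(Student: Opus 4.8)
The plan is to prove the displayed identity by two opposite inclusions, after recording two elementary preliminaries about the chains of model subspaces involved. First I would note the monotonicity built into the hypotheses: since $\Psi=\{\psi_n\}$ consists of increasing inner functions, $\psi_k$ divides $\psi_n$ for $k\le n$, whence $\s_{\psi_n}=\psi_n H^2(\D)\subseteq\psi_k H^2(\D)=\s_{\psi_k}$; dually, since $\Phi=\{\varphi_n\}$ consists of decreasing inner functions, $\s_{\varphi_k}\subseteq\s_{\varphi_n}$ for $k\le n$, and with the convention $\varphi_{-1}=0$ one has $\s_{\varphi_{-1}}=\{0\}$, so $\{0\}=\s_{\varphi_{-1}}\subseteq\s_{\varphi_0}\subseteq\s_{\varphi_1}\subseteq\cdots$ is an increasing chain. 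Telescoping along it gives, for every $n\ge0$, the finite orthogonal decomposition $\s_{\varphi_n}=\bigoplus_{k=0}^{n}\bigl(\s_{\varphi_k}\ominus\s_{\varphi_{k-1}}\bigr)$.

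For the forward inclusion I would fix $n$ and distribute $\s_{\psi_n}\ot(\,\cdot\,)$ over this finite orthogonal sum (tensoring with a fixed Hilbert space commutes with orthogonal direct sums), obtaining
\[
\s_{\psi_n}\ot\s_{\varphi_n}=\bigoplus_{k=0}^{n}\s_{\psi_n}\ot\bigl(\s_{\varphi_k}\ominus\s_{\varphi_{k-1}}\bigr)\subseteq\bigoplus_{k=0}^{n}\s_{\psi_k}\ot\bigl(\s_{\varphi_k}\ominus\s_{\varphi_{k-1}}\bigr),
\]
the last containment using $\s_{\psi_n}\subseteq\s_{\psi_k}$ for $k\le n$. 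Taking the closed linear span over all $n$ then gives $\cls_{\Psi,\Phi}=\bigvee_n\s_{\psi_n}\ot\s_{\varphi_n}\subseteq\bigoplus_{k\ge0}\s_{\psi_k}\ot(\s_{\varphi_k}\ominus\s_{\varphi_{k-1}})$. For the reverse inclusion, from $\s_{\varphi_k}\ominus\s_{\varphi_{k-1}}\subseteq\s_{\varphi_k}$ each summand satisfies $\s_{\psi_k}\ot(\s_{\varphi_k}\ominus\s_{\varphi_{k-1}})\subseteq\s_{\psi_k}\ot\s_{\varphi_k}\subseteq\cls_{\Psi,\Phi}$, and since $\cls_{\Psi,\Phi}$ is closed it contains the closed sum of all these summands.

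It remains to check that the right-hand side is a genuine orthogonal direct sum, which is the only mildly delicate point: although the first tensor factors $\s_{\psi_n}$ form a decreasing, hence non-orthogonal, chain, the second factors $\s_{\varphi_n}\ominus\s_{\varphi_{n-1}}$ are the successive increments of an increasing chain and so are pairwise orthogonal; hence for $m\ne n$ and elementary tensors $a\ot b$ and $c\ot d$ in the $n$-th and $m$-th summands one has $\langle a\ot b,c\ot d\rangle=\langle a,c\rangle\langle b,d\rangle=0$, so the summands are mutually orthogonal and the direct sum is legitimate. Combining the two inclusions yields the stated equality. I do not expect any serious obstacle beyond this bookkeeping: the essential content is just the telescoping identity together with the compatibility of tensoring, orthogonal direct sums, and closed linear spans.
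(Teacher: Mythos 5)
Your argument is correct and follows essentially the same route as the paper's (much terser) proof: the telescoping identity $\s_{\varphi_n}=\bigoplus_{k=0}^{n}\bigl(\s_{\varphi_k}\ominus\s_{\varphi_{k-1}}\bigr)$ combined with the nesting $\s_{\psi_n}\subseteq\s_{\psi_k}$ for $k\le n$. You merely spell out the two inclusions and the pairwise orthogonality of the summands, which the paper leaves implicit.
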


\begin{proof}
First note that for all $n\ge 1$,
\[
\bigoplus_{j=0}^n (\cls_{\varphi_{j}}\ominus\cls_{\varphi_{j-1}})
=\cls_{\varphi_{n}}.
 \]
Then the required representation of $\cls_{\Psi,\Phi}$ can be obtained
from the above identity and the fact that
$\cls_{\psi_n}\subseteq\cls_{\psi_{n-1}}, (n\ge 1)$.
\end{proof}

Keeping the equality $\cls_{\Psi,\Phi}\ominus
(z_1\cls_{\Psi,\Phi}+z_2\cls_{\Psi,\Phi})= (\cls_{\Psi,\Phi}\ominus
z_1\cls_{\Psi,\Phi})\cap (\cls_{\Psi,\Phi}\ominus
z_2\cls_{\Psi,\Phi})$ in mind we pass to describe the closed
subspace $(\cls_{\Psi,\Phi}\ominus z_1\cls_{\Psi,\Phi})$.

\begin{lemma}\label{zwandering}
Let $\cls_{\Psi,\Phi}$ be a generalized Rudin's submodule and
$\varphi_{-1}:=0$. Then
\[
(\cls_{\Psi,\Phi}\ominus
z_1\cls_{\Psi,\Phi})=\bigoplus_{n=0}^{\infty} \Comp \psi_n\otimes
(\cls_{\varphi_n}\ominus \cls_{\varphi_{n-1}}).
\]
\end{lemma}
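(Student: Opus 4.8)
The plan is to read the answer off the orthogonal decomposition $\cls_{\Psi,\Phi}=\bigoplus_{n=0}^\infty \s_{\psi_n}\ot(\cls_{\varphi_n}\ominus\cls_{\varphi_{n-1}})$ supplied by Lemma~\ref{representation}, by computing the orthogonal complement of $z_1\cls_{\Psi,\Phi}$ one summand at a time. Write $\clk_n:=\cls_{\varphi_n}\ominus\cls_{\varphi_{n-1}}$; these are pairwise orthogonal closed subspaces of $H^2(\D)$. Since $z_1=M_z\ot I_{H^2(\D)}$ is an isometry on $H^2(\D^2)$, applying it to the decomposition above gives
\[
z_1\cls_{\Psi,\Phi}=\bigoplus_{n=0}^\infty (z\,\s_{\psi_n})\ot \clk_n,
\]
again an orthogonal direct sum, with its $n$-th summand contained in the $n$-th summand of $\cls_{\Psi,\Phi}$. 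The only point needing attention here is that $z_1$ carries the Hilbert-space direct sum onto the Hilbert-space direct sum of the images: the inclusion $\subseteq$ is continuity of $z_1$ together with $\|z_1 f_n\|=\|f_n\|$, and $\supseteq$ holds because $z_1(\s_{\psi_n}\ot\clk_n)=(z\,\s_{\psi_n})\ot\clk_n$ lies in $z_1\cls_{\Psi,\Phi}$, which is closed as $z_1$ is an isometry.

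Given these two aligned orthogonal decompositions, taking the complement is formal. For $f=\sum_n f_n\in\cls_{\Psi,\Phi}$ with $f_n\in\s_{\psi_n}\ot\clk_n$, pairwise orthogonality of the $\clk_n$ forces $f_n\perp(z\,\s_{\psi_m})\ot\clk_m$ whenever $m\ne n$; hence $f\perp z_1\cls_{\Psi,\Phi}$ if and only if $f_n\perp(z\,\s_{\psi_n})\ot\clk_n$ for every $n$. Consequently
\[
\cls_{\Psi,\Phi}\ominus z_1\cls_{\Psi,\Phi}=\bigoplus_{n=0}^\infty\Big((\s_{\psi_n}\ot\clk_n)\ominus\big((z\,\s_{\psi_n})\ot\clk_n\big)\Big)=\bigoplus_{n=0}^\infty\big((\s_{\psi_n}\ominus z\,\s_{\psi_n})\ot\clk_n\big),
\]
the last equality being the elementary identity $(A\ot\clk)\ominus(B\ot\clk)=(A\ominus B)\ot\clk$ for closed subspaces $B\subseteq A$, which follows from distributivity of the Hilbert tensor product over orthogonal direct sums.

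Finally, since $\psi_n$ is inner, multiplication by $\psi_n$ is an isometry taking $H^2(\D)$ onto $\s_{\psi_n}$ and $zH^2(\D)$ onto $z\,\s_{\psi_n}$, so $\s_{\psi_n}\ominus z\,\s_{\psi_n}=\psi_n\big(H^2(\D)\ominus zH^2(\D)\big)=\Comp\,\psi_n$ (equivalently, the one-dimensional wandering subspace of the shift restricted to $\psi_n H^2(\D)$). Substituting this back gives
\[
\cls_{\Psi,\Phi}\ominus z_1\cls_{\Psi,\Phi}=\bigoplus_{n=0}^\infty \Comp\,\psi_n\ot\clk_n=\bigoplus_{n=0}^\infty \Comp\,\psi_n\ot(\cls_{\varphi_n}\ominus\cls_{\varphi_{n-1}}),
\]
as claimed. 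I expect the only genuinely delicate step to be the first one, namely bookkeeping the two orthogonal direct-sum decompositions and confirming that $z_1\cls_{\Psi,\Phi}$ is their honest term-by-term image rather than merely contained in it; once that is in place, the rest is routine Hilbert-space and Hardy-space manipulation.
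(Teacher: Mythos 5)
Your argument is correct and follows essentially the same route as the paper: decompose $\cls_{\Psi,\Phi}$ via Lemma~\ref{representation}, pass the isometry $z_1$ through the orthogonal sum term by term, and conclude with the Beurling-type identity $\s_{\psi_n}\ominus z\,\s_{\psi_n}=\Comp\,\psi_n$. Your write-up merely spells out the bookkeeping (that $z_1\cls_{\Psi,\Phi}$ is the term-by-term image and that complements distribute over the aligned sums) which the paper leaves implicit.
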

\begin{proof}
By Lemma ~\ref{representation} it follows that
\[\begin{split}\cls_{\Psi,\Phi}\ominus z_1 \cls_{\Psi,\Phi} & =
\big( \bigoplus_{n=0}^\infty \s_{\psi_n}\ot
(\cls_{\varphi_{n}}\ominus\cls_{\varphi_{n-1}})\big) \ominus
z_1\big( \bigoplus_{n=0}^\infty \s_{\psi_n}\ot
(\cls_{\varphi_{n}}\ominus\cls_{\varphi_{n-1}})\big)\\& =
\bigoplus_{n=0}^\infty (\s_{\psi_n} \ominus z_1 \s_{\psi_n}) \ot
(\cls_{\varphi_{n}}\ominus\cls_{\varphi_{n-1}}).
\end{split}
\]
Thus the result follows from the fact that $\cls_{\theta} \ominus z
\cls_{\theta} = \mathbb{C} \theta$, for any inner $\theta \in
H^{\infty}(\D)$.
\end{proof}

%\begin{lemma}\label{star-zero}
%Let $\theta_1, \theta_2 \in H^\infty(\D)$ be a pair of inner
%functions and $\cls_{\theta_1} \subseteq \cls_{\theta_2}$. Then
%\[P_{\cls_{\theta_1}} M_z^* P_{\cls_{\theta_2} \ominus
%\cls_{\theta_1}} = 0.
%\]
%\end{lemma}
%\begin{proof}
%First note that $P_{\cls_{\theta_2} \ominus \cls_{\theta_1}} =
%M_{\theta_2} M_{\theta_2}^* - M_{\theta_1} M_{\theta_1}^*$. Then
%\[\begin{split}P_{\cls_{\theta_2} \ominus
%\cls_{\theta_1}} M_z P_{\cls_{\theta_1}}  & = (M_{\theta_2}
%M_{\theta_2}^* - M_{\theta_1} M_{\theta_1}^*) M_z M_{\theta_1}
%M_{\theta_1}^* = P_{\cls_{\theta_2}} (M_z P_{\cls_{\theta_1}}) -
%M_{\theta_1} M_z M_{\theta_1}^* \\& = M_z P_{\cls_{\theta_1}} - M_z
%M_{\theta_1} M_{\theta_1}^*  = M_z P_{\cls_{\theta_1}} - M_z
%P_{\cls_{\theta_1}}\\& = 0.
%\end{split}
%\]
%\end{proof}

Before proceeding further, we first observe that for $\alpha \in \D$ and $m\ge 1$,
$\{b_{\alpha}^j M_z^*
b_{\alpha}\}_{j=0}^{m-1}$ is an orthogonal basis of the quotient
module $\clq_{b_{\alpha}^m}$.

\begin{lemma}\label{key1}
Let $\theta_1, \theta_2$ be a pair of inner functions such that
$\theta_1 = b_{\alpha}^m \theta_2$ for some $\alpha \in \mathbb{D}$
and $m \geq 1$. Then
\[ \cls_{\theta_2}\ominus \cls_{\theta_1}= \theta_2 \clq_{b_{\alpha}^m} = \theta_2(H^2(\D)\ominus
b_{\alpha}^{m} H^2(\D)) = \bigoplus_{k=0}^{m-1} \Comp\theta_2
(b_{\alpha}^k M_z^*b_{\alpha}).
\]
In particular, $\cls_{\theta_2} \ominus \cls_{\theta_1}$ is an $m$-dimensional
subspace of $H^2(\D)$.
\end{lemma}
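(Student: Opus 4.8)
The plan is to exploit the multiplicative structure $\theta_1 = b_\alpha^m \theta_2$ to reduce the computation of the orthogonal complement $\cls_{\theta_2} \ominus \cls_{\theta_1}$ to a statement about a single Blaschke factor. First I would observe that since $\theta_1 = b_\alpha^m \theta_2$, we have $\cls_{\theta_1} = \theta_1 H^2(\D) = \theta_2 (b_\alpha^m H^2(\D)) = \theta_2 \cls_{b_\alpha^m} \subseteq \theta_2 H^2(\D) = \cls_{\theta_2}$, so the inclusion $\cls_{\theta_1} \subseteq \cls_{\theta_2}$ is clear and the difference is well-defined. Next, because multiplication by the inner function $\theta_2$ is an isometry on $H^2(\D)$, it maps orthogonal complements to orthogonal complements within its range: $M_{\theta_2}$ is an isometry, hence $\Ran M_{\theta_2} = \theta_2 H^2(\D)$ and for any closed subspace $\mathcal{M} \subseteq H^2(\D)$ one has $\theta_2 H^2(\D) \ominus \theta_2 \mathcal{M} = \theta_2(H^2(\D) \ominus \mathcal{M})$. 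Applying this with $\mathcal{M} = b_\alpha^m H^2(\D)$ gives immediately
\[
\cls_{\theta_2} \ominus \cls_{\theta_1} = \theta_2 H^2(\D) \ominus \theta_2 b_\alpha^m H^2(\D) = \theta_2\big(H^2(\D) \ominus b_\alpha^m H^2(\D)\big) = \theta_2 \clq_{b_\alpha^m},
\]
which is the first two equalities in the statement.

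For the third equality and the dimension count, I would invoke the observation recorded just before the lemma: $\{b_\alpha^j M_z^* b_\alpha\}_{j=0}^{m-1}$ is an orthogonal basis of $\clq_{b_\alpha^m} = H^2(\D) \ominus b_\alpha^m H^2(\D)$. (If one wishes to prove this observation rather than cite it, note that $M_z^* b_\alpha = M_z^* b_\alpha(z) = \frac{b_\alpha(z) - b_\alpha(0)}{z}$ is, up to a unimodular constant, the normalized reproducing kernel $k_\alpha / \|k_\alpha\|$ at $\alpha$, which spans $\clq_{b_\alpha} = H^2(\D) \ominus b_\alpha H^2(\D)$; then multiplying by powers of the inner function $b_\alpha$ and using that $b_\alpha^m H^2(\D) = b_\alpha^j(b_\alpha^{m-j} H^2(\D))$ gives the telescoping orthogonal decomposition $\clq_{b_\alpha^m} = \bigoplus_{j=0}^{m-1} b_\alpha^j \clq_{b_\alpha}$.) Applying the isometry $M_{\theta_2}$ to this orthogonal basis preserves orthogonality and produces the orthogonal basis $\{\theta_2 b_\alpha^j M_z^* b_\alpha\}_{j=0}^{m-1}$ of $\theta_2 \clq_{b_\alpha^m}$, which yields
\[
\cls_{\theta_2} \ominus \cls_{\theta_1} = \bigoplus_{k=0}^{m-1} \Comp\, \theta_2 (b_\alpha^k M_z^* b_\alpha),
\]
and in particular shows this space is $m$-dimensional.

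The main point to be careful about—though it is routine rather than a genuine obstacle—is the interchange of the isometry $M_{\theta_2}$ with the orthogonal-complement operation: this requires knowing that $M_{\theta_2}$ has closed range (automatic, since it is an isometry) and that it is unitary from $H^2(\D)$ onto $\theta_2 H^2(\D)$, so that both orthogonality and closedness of subspaces are transported faithfully. Everything else is bookkeeping: the identification of $\clq_{b_\alpha^m}$ with $H^2(\D) \ominus b_\alpha^m H^2(\D)$ is just notation, and the explicit orthogonal basis is the pre-lemma observation. I expect no step to require more than a line or two.
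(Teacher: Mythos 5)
Your argument is correct and follows exactly the paper's route: the equalities come from the fact that $M_{\theta_2}$ is an isometry (so it transports orthogonal complements), combined with the pre-lemma observation that $\{b_{\alpha}^j M_z^* b_{\alpha}\}_{j=0}^{m-1}$ is an orthogonal basis of $\clq_{b_{\alpha}^m}$. Your additional sketch of why that observation holds is a harmless (and correct) elaboration of what the paper simply cites.
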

\begin{proof}
The proof follows from the fact that $M_{\theta_2}$ is an isometry
and $\{b_{\alpha}^j M_z^* b_{\alpha}\}_{j=0}^{m-1}$ is an orthogonal
basis of $\clq_{b_{\alpha}^m}$.
\end{proof}

To proceed with our discussion it is useful to compute the matrix
representation of the operator $P_{\cls_{\theta_2} \ominus \cls_{\theta_1}}
M_z^*|_{\cls_{\theta_2} \ominus \cls_{\theta_1}}$ with respect to
the orthogonal basis $\{\theta_2 b_{\alpha}^j M_z^*
b_{\alpha}\}_{j=0}^{m-1}$, which will be used in the proof of the
main result of this paper. To this end let $v_j = \theta_2
b_{\alpha}^j M_z^* b_{\alpha}$ for all $j = 0, \ldots, m-1$. Then
\[\langle (P_{\cls_{\theta_2} \ominus \cls_{\theta_1}} M_z^*) v_j,
v_i \rangle = \langle M_z^* b_{\alpha}^j M_z^* b_{\alpha},
b_{\alpha}^i M_z^* b_{\alpha} \rangle.\]
Note that
\[M_z^* b_{\alpha} =-\frac{\bar{\alpha}}{|\alpha|} (1 - |\alpha|^2) \mathbb{S}(\cdot,
\alpha),\]where $\mathbb{S}(\cdot, \alpha)$ is the Szeg\"{o} kernel
on $\D$ defined by $\mathbb{S}(\cdot, \alpha)(z) = (1 - \bar{\alpha}
z)^{-1}$, $z \in \D$. Consequently, for $i = j$, we have
\[\langle (P_{\cls_{\theta_2} \ominus \cls_{\theta_1}} M_z^*) v_j,
v_i \rangle = \langle M_z^{*2} b_{\alpha}, M_z^* b_{\alpha} \rangle
= \bar{\alpha} (1 - |\alpha|^2),\]
for $i>j$,
\[\langle
(P_{\cls_{\theta_2} \ominus \cls_{\theta_1}} M_z^*) v_j, v_i \rangle
= \langle M_z^* b_{\alpha}^j M_z^* b_{\alpha}, b_{\alpha}^i M_z^*
b_{\alpha} \rangle = \langle  M_z^{*2}
b_{\alpha},b_{\alpha}^{i-j} M_z^* b_{\alpha} \rangle = 0,\]
and for $j>i$,
\[\begin{split}\langle (P_{\cls_{\theta_2} \ominus \cls_{\theta_1}} M_z^*)
v_j, v_i \rangle & = \langle M_z^* b_{\alpha}^j M_z^* b_{\alpha},
b_{\alpha}^i M_z^* b_{\alpha} \rangle =\langle M_z^*
b_{\alpha}^{j-i} M_z^* b_{\alpha}, M_z^* b_{\alpha} \rangle \\& = (1
- |\alpha|^2)^2 \langle M_z^* b_{\alpha}^{j-i} \mathbb{S}(\cdot,
{\alpha}), \mathbb{S}(\cdot, {\alpha}) \rangle = (- \alpha)^{j - i -
1} (1 - |\alpha|^2)^2,\end{split}\]where the last equality follows
from
\[\begin{split}\langle b_{\alpha}^{j-i} \mathbb{S}(\cdot, {\alpha}),
z\mathbb{S}(\cdot, {\alpha}) \rangle & = \langle b_{\alpha}^{j-i}
\mathbb{S}(\cdot, {\alpha}), b_{\alpha} + \alpha \mathbb{S}(\cdot,
{\alpha}) \rangle = \langle b_{\alpha}^{j-i} \mathbb{S}(\cdot,
{\alpha}), b_{\alpha} \rangle + \bar{\alpha} \langle
b_{\alpha}^{j-i} \mathbb{S}(\cdot, {\alpha}),
 \mathbb{S}(\cdot, {\alpha}) \rangle\\ & = \langle b_{\alpha}^{j-i-1}
 \mathbb{S}(\cdot, {\alpha}), \mathbb{S}(\cdot, 0)
\rangle + \bar{\alpha} \big(b_{\alpha}^{j-i} \mathbb{S}(\cdot,
\alpha)\big)(\alpha) = \big(b_{\alpha}^{j-i-1} \mathbb{S}(\cdot,
{\alpha})\big)(0) + 0\\ & = (- \alpha)^{j - i - 1}.
\end{split}\] Therefore,
\begin{align*}
\langle (P_{\cls_{\theta_2} \ominus \cls_{\theta_1}}
M_z^*|_{{\cls_{\theta_2} \ominus \cls_{\theta_1}}}) v_j, v_i \rangle
&=\left\{\begin{array}{cl}
0 & \text{if } j< i \\
\bar{\alpha}(1-|\alpha|^2)& \text {if } j = i\\
%(1-|\alpha|^2)^2& \text{if } j=i+1\\
(-\alpha)^{j-i-1}(1-|\alpha|^2)^2&\text{if } j\ge i+1.
\end{array}\right.
\end{align*}
Finally, since $M_{\theta_2b_{\alpha}^i} \in \clb(H^2(\D))$ is an isometry for any $0\le i\le m-1$,
we have
\[\|v_i\|=\|\theta_2 b_{\alpha}^i M_z^* b_{\alpha}\| = \|M_z^* b_{\alpha}\| = (1 -
|\alpha|^2) \|\mathbb{S}(\cdot, \alpha)\| = (1 -
|\alpha|^2)^{\frac{1}{2}}. \]

The computations above then show that the matrix representation of
$P_{\cls_{\theta_2} \ominus \cls_{\theta_1}} M_z^*|_{\cls_{\theta_2}
\ominus \cls_{\theta_1}}$ with respect to the orthonormal basis
$\{\frac{1}{\sqrt{1 - |\alpha|^2}} v_j\}_{j=0}^{m-1}$ is the upper
triangular matrix with diagonal entries $\bar{\alpha}$ and off diagonal entries $(-\alpha)^{j-i-1}(1-|\alpha|^2)$.

\section{Main Results}

\begin{thm}
Let $\cls_{\Phi}$ be a Rudin submodule of $H^2(\D^2)$. Then
\[
\text{dim} (\cls_{\Phi}\ominus (z_1 \cls_{\Phi} + z_2 \cls_{\Phi}))=
1+\#\{n\ge 0: \alpha_n=0\}<\infty.
\]
\end{thm}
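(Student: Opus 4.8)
The plan is to intersect the known formula for $\cls_{\Phi}\ominus z_1\cls_{\Phi}$ with the orthogonal complement of $z_2\cls_{\Phi}$, reducing everything to one-variable Hardy space facts via the isometries $M_{\varphi_n}$. Throughout write $E_n:=\cls_{\varphi_n}\ominus\cls_{\varphi_{n-1}}$, with $\varphi_{-1}=0$, so that $E_0=\cls_{\varphi_0}$. First, $\clw_{\cls_{\Phi}}=(\cls_{\Phi}\ominus z_1\cls_{\Phi})\cap(\cls_{\Phi}\ominus z_2\cls_{\Phi})$. Lemma~\ref{zwandering} with $\psi_n=z^n$ gives $\cls_{\Phi}\ominus z_1\cls_{\Phi}=\bigoplus_{n\ge 0}\Comp z^n\otimes E_n$, so a typical element is $f=\sum_{n\ge 0}z^n\otimes g_n$ with $g_n\in E_n$ and $\sum\|g_n\|^2<\infty$. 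Since $I_{H^2(\D)}\otimes M_z$ is an isometry, applying it to the orthogonal decomposition $\cls_{\Phi}=\bigoplus_n\s_{z^n}\otimes E_n$ of Lemma~\ref{representation} gives $z_2\cls_{\Phi}=\bigoplus_n\s_{z^n}\otimes M_zE_n$, again an orthogonal sum, spanned by the vectors $z^{n+j}\otimes M_ze$ with $n,j\ge 0$ and $e\in E_n$; note that $M_z$ does not preserve the summands $E_n$, which is why one keeps these explicit generators.

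Next I would pin down the perpendicularity condition. Since $\langle f,\,z^{n+j}\otimes M_ze\rangle=\langle g_{n+j},M_ze\rangle$ (only the term with matching first coordinate survives), $f\perp z_2\cls_{\Phi}$ is equivalent to $\langle g_N,M_ze\rangle=0$ for every $N$ and every $e\in E_n$ with $n\le N$. Using the identity $\bigoplus_{n=0}^{N}E_n=\cls_{\varphi_N}$ (established in the proof of Lemma~\ref{representation}) together with the fact that $M_z$ is an isometry, this should collapse to the single coordinatewise condition $g_N\perp M_z\cls_{\varphi_N}=z\varphi_NH^2(\D)$, i.e. $g_N\in\q_{z\varphi_N}$, for every $N$. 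Hence I expect $\clw_{\cls_{\Phi}}=\bigoplus_{N\ge 0}\Comp z^N\otimes\big(E_N\cap\q_{z\varphi_N}\big)$, and the problem reduces to computing each finite-dimensional space $E_N\cap\q_{z\varphi_N}$.

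For the one-variable computation: when $N\ge 1$ we have $\varphi_{N-1}=b_{\alpha_{N-1}}^{l_{N-1}}\varphi_N$, so Lemma~\ref{key1} gives $E_N=\varphi_N\q_{b_{\alpha_{N-1}}^{l_{N-1}}}$; writing $g_N=\varphi_Nh$ and using that $M_{\varphi_N}$ is an isometry, $g_N\in E_N$ means $h\in\q_{b_{\alpha_{N-1}}^{l_{N-1}}}$, while $g_N\in\q_{z\varphi_N}$ means $h\perp zH^2(\D)$, i.e. $h$ is constant. Thus $E_N\cap\q_{z\varphi_N}\ne\{0\}$ precisely when the constant function $1$ lies in $\q_{b_{\alpha_{N-1}}^{l_{N-1}}}$, which happens iff $b_{\alpha_{N-1}}(0)=0$, i.e. iff $\alpha_{N-1}=0$, and then $E_N\cap\q_{z\varphi_N}=\Comp\varphi_N$ is one-dimensional. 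For $N=0$ the same computation with $E_0=\varphi_0H^2(\D)$ yields $E_0\cap\q_{z\varphi_0}=\Comp\varphi_0$ unconditionally. Summing, $\dim\clw_{\cls_{\Phi}}=1+\#\{N\ge 1:\alpha_{N-1}=0\}=1+\#\{n\ge 0:\alpha_n=0\}$; finiteness is automatic, since $\sum_n(1-l_n|\alpha_n|)<\infty$ forces $1-l_n|\alpha_n|\to 0$, whereas $\alpha_n=0$ contributes the term $1$, so only finitely many $\alpha_n$ vanish.

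The step I expect to be the main obstacle is the middle one: correctly identifying $z_2\cls_{\Phi}$ (keeping in mind that $M_z$ moves vectors out of the summands $E_n$, so one must work with the generators $z^{n+j}\otimes M_ze$) and rigorously justifying that the two-index perpendicularity bookkeeping collapses to the clean coordinatewise requirement $g_N\in E_N\cap\q_{z\varphi_N}$. Everything downstream of that reduction is a routine computation with model spaces and Blaschke factors, for which the matrix/orthonormal-basis material assembled before Section~3 is already more than enough.
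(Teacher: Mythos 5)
Your argument is correct, and its skeleton is the paper's: you use Lemmas~\ref{representation} and~\ref{zwandering} to get $\cls_{\Phi}=\bigoplus_n \s_{z^n}\otimes(\cls_{\varphi_n}\ominus\cls_{\varphi_{n-1}})$ and $\cls_{\Phi}\ominus z_1\cls_{\Phi}=\bigoplus_n\Comp z^n\otimes(\cls_{\varphi_n}\ominus\cls_{\varphi_{n-1}})$, and your ``orthogonal to $z_2\cls_{\Phi}$'' bookkeeping is exactly the paper's computation of $\ker P_{\cls_{\Phi}}M_{z_2}^*|_{\cls_{\Phi}\ominus z_1\cls_{\Phi}}$ in different language; in particular the collapse you worried about is fine, since $g_N\perp M_z(\cls_{\varphi_n}\ominus\cls_{\varphi_{n-1}})$ for all $n\le N$ is equivalent to $g_N\perp M_z\cls_{\varphi_N}$ because $\bigoplus_{n=0}^N(\cls_{\varphi_n}\ominus\cls_{\varphi_{n-1}})=\cls_{\varphi_N}$ (the paper reaches the same levelwise condition via $P_{\cls_{\Phi}}(z^N\otimes h)=z^N\otimes P_{\cls_{\varphi_N}}h$ and $P_{\s_{\varphi_{N-1}}}M_z^*(\cls_{\varphi_N}\ominus\cls_{\varphi_{N-1}})=\{0\}$). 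Where you genuinely diverge is the endgame: the paper computes the kernel of the compression $P_{\cls_{\varphi_N}\ominus\cls_{\varphi_{N-1}}}M_z^*|_{\cls_{\varphi_N}\ominus\cls_{\varphi_{N-1}}}$ using the upper-triangular matrix (diagonal $\bar{\alpha}_{N-1}$) worked out at the end of Section~2, whereas you identify the level-$N$ contribution directly as $(\cls_{\varphi_N}\ominus\cls_{\varphi_{N-1}})\cap\q_{z\varphi_N}$, factor out the isometry $M_{\varphi_N}$ via Lemma~\ref{key1}, and reduce to whether the constant $1$ lies in $\q_{b_{\alpha_{N-1}}^{l_{N-1}}}$, i.e.\ whether $b_{\alpha_{N-1}}(0)=0$, which holds iff $\alpha_{N-1}=0$; the $N=0$ level gives $\Comp\varphi_0$ unconditionally, and your finiteness remark is also correct. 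Your route is more elementary and makes the Section~2 matrix computation unnecessary for the theorem itself; the paper's matrix approach costs more up front but is then reused verbatim for the infinite-matrix analysis of Rudin's original submodule in Corollary~\ref{rudin}, which your constants-in-model-space trick would need a small extra step to reproduce (there the relevant quotients are by infinite Blaschke products with varying zeros rather than a power of a single $b_{\alpha}$, though the same ``evaluate at $0$'' idea still works).
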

\begin{proof}
Since $\{n\ge 0: \alpha_n=0\}$ is  a finite set, it is enough to
show that the equality holds. First, observe that
\[\cls_{\Phi}\ominus (z_1 \cls_{\Phi} + z_2 \cls_{\Phi}) = (\cls_{\Phi} \ominus z_1
\cls_{\Phi}) \cap (\ker P_{\cls_{\Phi}}M_{z_2}^*|_{\cls_{\Phi}})
=\ker P_{\cls_{\Phi}}M_{z_2}^*|_{\cls_{\Phi} \ominus z_1
\cls_{\Phi}}.\] Now Lemmas \ref{representation} and
\ref{zwandering}, with $\psi_n = z^n$, $n \geq 0$, and $\varphi_{-1}
= 0$ implies that
\[\cls_{\Phi} = \bigoplus_{n\ge 0} \big( z^n H^2(\D)
\otimes(\cls_{\varphi_n}\ominus \cls_{\varphi_{n-1}})\big),\]and
\[\quad \cls_{\Phi}\ominus z_1\cls_{\Phi}= \bigoplus_{n\ge 0}
\big(\Comp z^n\otimes(\cls_{\varphi_n}\ominus
\cls_{\varphi_{n-1}})\big).\] Then
\[\begin{split}P_{\cls_{\Phi}}M_{z_2}^*(\cls_{\Phi}\ominus z_1\cls_{\Phi}) &
= P_{\cls_{\Phi}} \big(\bigoplus_{n\ge 0}
\big(\Comp z^n\otimes M_z^* (\cls_{\varphi_n}\ominus
\cls_{\varphi_{n-1}})\big)\big)  \\ &  = \bigoplus_{n\ge 0}
\big(\Comp z^n \otimes P_{\cls_{\varphi_n}\ominus
\cls_{\varphi_{n-1}}} M_z^*(\cls_{\varphi_n}\ominus
\cls_{\varphi_{n-1}})\big),
\end{split}\]
where for the last equality we have used the fact that
$P_{\s_{\varphi_{n-1}}}M_{z}^*(\cls_{\varphi_n}\ominus
\cls_{\varphi_{n-1}})=\{0\}$. Therefore,
%\[P_{\cls_{\Phi}}M_{z_2}^*|_{\cls_{\Phi}\ominus
%z_1\cls_{\Phi}} = \bigoplus_{n\ge 0} \big(P_{\Comp z^n} \otimes
%P_{\cls_{\varphi_n}\ominus \cls_{\varphi_{n-1}}}
%M_z^*|_{\cls_{\varphi_n}\ominus \cls_{\varphi_{n-1}}}\big),\]and
%hence
\[\ker  P_{\cls_{\Phi}}M_{z_2}^*|_{\cls_{\Phi}\ominus
z_1\cls_{\Phi}}= \big(\Comp\otimes \ker
P_{\cls_{\varphi_0}}M_{z}^*|_{\cls_{\varphi_0}}\big)
\bigoplus_{n=1}^{\infty} \big(\Comp z^n\otimes \ker
(P_{\cls_{\varphi_n}\ominus\cls_{\varphi_{n-1}}}M_{z}^*|_
{\cls_{\varphi_n}\ominus \cls_{\varphi_{n-1}}})\big).
\]For the first term on the right-hand side, we have $\ker
P_{\cls_{\varphi_0}}M_{z}^*|_{\cls_{\varphi_0}}=\Comp\varphi_0$,
that is, \[\mbox{dim~}(\ker
P_{\cls_{\varphi_0}}M_{z}^*|_{\cls_{\varphi_0}}) = 1.\] On the other
hand, the representing matrix of
$P_{\cls_{\varphi_n}\ominus\cls_{\varphi_{n-1}}}M_{z}^*|_
{\cls_{\varphi_n}\ominus \cls_{\varphi_{n-1}}}$ with respect to the
orthonormal basis $\{\frac{1}{\sqrt{1 -
|\alpha_{n-1}|^2}}\varphi_{n}b_{\alpha_{n-1}}^{k}M_{z}^*b_{\alpha_{n-1}}:
0\le k<l_n\}$, as discussed at the end of the previous section, is
an upper triangular matrix with $\bar{\alpha}_{n-1}$ on the
diagonal. This matrix is invertible if and only if $\alpha_{n-1}\neq
0$. In the case of $\alpha_{n-1}=0$, since the supper diagonal entries
are $1$, the rank of the representing matrix is $l_n-1$ and hence
the kernel is one-dimensional. This completes the proof.
\end{proof}

As an illustration of the above theorem, we consider an explicit example
of Rudin submodule. Let $\s_R$ denotes the submodule of $H^2(\D^2)$,
consisting those functions which have zero of order at least $n$ at
$(0,\alpha_n):= (0, 1-n^{-3})$. This submodule was introduced by W.
Rudin in the context of infinite rank submodules of $H^2(\D^2)$. It
is also well known that such $\s_R$ is a Rudin submodule (see
\cite{SY}, \cite{S2}, \cite{S3}), that is, $\cls_R = \cls_{\Phi}$
where
\[
\varphi_0=\prod_{i=1}^{\infty}b_{\alpha_i}^i,\quad\quad
\varphi_n=\frac{\varphi_{n-1}}
{\prod_{j=n}^{\infty}b_{\alpha_j}}\quad (n\ge 1).
\]
In this case, for all $n\ge 1$, $\s_{\varphi_n}\ominus\s_{\varphi_{n-1}}=\varphi_n(H^2(\D)\ominus
\prod_{j=n}^{\infty}b_{\alpha_j}H^2(\D))$. Set
\[e_k:=\big(\prod_{j=k+1}^{\infty}b_{\alpha_j}\big)M_z^*b_{\alpha_k}\quad (k\ge 1).\]
Then $\{\varphi_n e_k\}_{k=n}^{\infty}$ is an orthogonal basis for
$\s_{\varphi_n}\ominus\s_{\varphi_{n-1}}$, for all $n\ge 1$. A
similar calculation, as in the end of the previous section, shows
that the matrix representation of the operator
$P_{\cls_{\varphi_n}\ominus\cls_{\varphi_{n-1}}}M_{z}^*|_
{\cls_{\varphi_n}\ominus \cls_{\varphi_{n-1}}}$ with respect to
$\{\varphi_n e_k\}_{k=n}^{\infty}$ is again an upper triangular
infinite matrix with diagonal entries $\bar{\alpha_k}$ $(k\ge n)$.
Therefore $\ker
P_{\cls_{\varphi_n}\ominus\cls_{\varphi_{n-1}}}M_{z}^*|_
{\cls_{\varphi_n}\ominus \cls_{\varphi_{n-1}}}=\{0\}$ for all $n>1$.
Now for $n=1$, since $\alpha_1=0$ its kernel is one dimensional. By
the same argument, as in the proof of the above theorem, we obtain
the following result.
\begin{cor}
\label{rudin}
Let $\s_{R}$ be the Rudin submodule as above. Then $\s_R\ominus(z_1\s_R+z_2\s_R)$ is a two dimensional
subspace giben by
\[\s_R\ominus(z_1 \s_R + z_2 \s_R)=\Comp \prod_{i=1}^{\infty}b_{\alpha_i}^i\oplus
\Comp \prod_{i=2}^{\infty}b_{\alpha_i}^i.
\]
%In particular, $\s_R\ominus(z_1\s_R+z_2\s_R)$ is a two dimensional
%subspace.
\end{cor}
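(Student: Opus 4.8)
The plan is to obtain the corollary by specializing the orthogonal decomposition produced in the proof of the above theorem to the particular inner sequence defining $\s_R$, and then reading off the two surviving vectors explicitly; no new dimension count is needed. Recall $\s_R=\cls_\Phi$ with $\varphi_0=\prod_{i=1}^\infty b_{\alpha_i}^i$ and $\varphi_n=\varphi_{n-1}/\prod_{j=n}^\infty b_{\alpha_j}$, and that $\alpha_n=1-n^{-3}$ gives $\alpha_1=0$ while $\alpha_k\neq 0$ for every $k\ge 2$. The theorem then gives
\[
\s_R\ominus(z_1\s_R+z_2\s_R)=\big(\Comp\ot\ker P_{\s_{\varphi_0}}M_z^*|_{\s_{\varphi_0}}\big)\bigoplus_{n=1}^\infty\big(\Comp z^n\ot\ker P_{\s_{\varphi_n}\ominus\s_{\varphi_{n-1}}}M_z^*|_{\s_{\varphi_n}\ominus\s_{\varphi_{n-1}}}\big),
\]
so everything reduces to the kernels of the compressed backward shifts on the individual summands.

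First I would dispatch the $n=0$ block: exactly as in the theorem's proof, $\ker P_{\s_{\varphi_0}}M_z^*|_{\s_{\varphi_0}}=\Comp\varphi_0$, contributing the line $\Comp(1\ot\varphi_0)=\Comp\prod_{i=1}^\infty b_{\alpha_i}^i$, which is the first summand. For the blocks $n\ge 1$ I would invoke the orthogonal basis $\{\varphi_n e_k\}_{k=n}^\infty$ of $\s_{\varphi_n}\ominus\s_{\varphi_{n-1}}$, with $e_k=(\prod_{j=k+1}^\infty b_{\alpha_j})M_z^*b_{\alpha_k}$, and the matrix computed in the example preceding this corollary, namely that the compression of $M_z^*$ is upper triangular with diagonal entries $\bar\alpha_k$ $(k\ge n)$. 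For $n\ge 2$ all these diagonal entries are nonzero, so the compression is injective and the block contributes nothing; for $n=1$ the leading entry $\bar\alpha_1$ vanishes, so the first basis vector $\varphi_1 e_1$ lies in the kernel, while the remaining entries $\bar\alpha_k$ $(k\ge 2)$ are nonzero, forcing the kernel to be exactly the line $\Comp\varphi_1 e_1$.

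It then remains to identify $\varphi_1 e_1$, which is the crux of matching the stated answer. Since $\alpha_1=0$ we have $b_{\alpha_1}=z$, hence $M_z^*b_{\alpha_1}=M_z^*z=1$ and $e_1=\prod_{j=2}^\infty b_{\alpha_j}$; unwinding the recursion gives $\varphi_1=\prod_{i=2}^\infty b_{\alpha_i}^{i-1}$, whence $\varphi_1 e_1=\prod_{i=2}^\infty b_{\alpha_i}^i$. Thus the $n=1$ block contributes $\Comp\big(z_1\ot\prod_{i=2}^\infty b_{\alpha_i}^i\big)$, the second summand. Adding the two surviving one-dimensional blocks yields $\dim\big(\s_R\ominus(z_1\s_R+z_2\s_R)\big)=2$, in agreement with $1+\#\{n:\alpha_n=0\}=1+1$, and the explicit description follows.

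The step I expect to demand the most care is the injectivity of the compressed shift on the blocks with nonvanishing leading diagonal entry (the $n\ge 2$ blocks, and the complementary part of the $n=1$ block): unlike the finite-dimensional blocks of the main theorem, where a nonzero diagonal forces invertibility immediately, here each block is infinite dimensional, and ``upper triangular with nonzero diagonal'' does not by itself exclude a kernel vector. I would settle this using the explicit entries of the matrix from the example together with $\alpha_k=1-k^{-3}$ staying bounded away from $0$ for $k\ge 2$; once that is in place, the only remaining bookkeeping is the elementary telescoping that produces $\varphi_1=\prod_{i=2}^\infty b_{\alpha_i}^{i-1}$ and $\varphi_1 e_1=\prod_{i=2}^\infty b_{\alpha_i}^i$.
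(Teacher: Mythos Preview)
Your approach is essentially identical to the paper's: the paper likewise specializes the theorem's orthogonal decomposition to this particular sequence, uses the same orthogonal basis $\{\varphi_n e_k\}_{k\ge n}$, reads off the upper-triangular matrix of the compressed backward shift with diagonal entries $\bar\alpha_k$, and concludes that the kernel is trivial for $n\ge 2$ and one-dimensional for $n=1$.

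Two remarks are nonetheless worth recording. First, your caution about injectivity is well taken, and in fact the paper glosses over exactly the same point: it simply asserts that nonzero diagonal entries force trivial kernel even though the blocks here are infinite dimensional, so your plan to justify this explicitly is an improvement rather than a deviation. Second, your explicit identification of the $n=1$ kernel vector as $z_1\otimes\prod_{i=2}^\infty b_{\alpha_i}^i$ is correct; the corollary as printed omits the factor $z_1$, which appears to be a typo in the paper rather than an error on your part.
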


We end this note with an intriguing question, raised by Nakazi
\cite{N}: Does there exist a submodule $\s$ of $H^2(\D^2)$ with
$\mbox{rank~} \s = 1$, such that $\s\ominus(z_1 \s+ z_2 \s)$ is not
a generating set for $\s$?
Although we are unable to determine such (counter-)example, however, we do
have the following special example: let $\{\alpha_n\}_{n=0}^\infty
\subseteq \D \setminus \{0\}$ be a sequence of distinct points
and $\Phi=\{\varphi_n\}_{n\ge 0}$, and
$\varphi_n:=\prod_{i=n}^{\infty}b_{\alpha_i}^{l_i}$. Then, by the main result 
of this paper,
$\s_{\Phi}\ominus (z_1 \s_{\Phi}+ z_2 \s_{\Phi})=\Comp\otimes \Comp
\varphi_0$ is a one dimensional, non-generating subspace of
$\s_{\Phi}$. In fact it follows from \cite{III2} that the rank of
$\s_{\Phi}$ is $2$.

\noindent\textbf{Acknowledgment:} The first author is
grateful to Indian Statistical Institute, Bangalore Centre for warm
hospitality.

\end{document}